\documentclass[12pt]{amsart}

\usepackage{amsmath}

\calclayout

\usepackage{amsfonts,amssymb,amsthm,enumitem}
\usepackage{mathtools}
\usepackage{nicefrac,setspace}
\usepackage{xcolor}

\usepackage[T1]{fontenc}

\usepackage{hyperref}       % hyperlinks
\usepackage{url}            % simple URL typesetting
\usepackage{booktabs}       % professional-quality tables
\usepackage{amsfonts}       % blackboard math symbols
\usepackage{nicefrac}       % compact symbols for 1/2, etc.
\usepackage{microtype}      % microtypography

\newtheorem*{theorem*}{Theorem}
\newtheorem*{claim*}{Claim}
\newtheorem*{lemma*}{Lemma}
\newtheorem*{remark}{Remark}

\newtheorem*{corollary*}{Corollary}

\newcommand{\X}{{\mathcal{X}}}

\newcommand{\N}{\mathbb{N}}
\newcommand{\eps}{\epsilon}

\newcommand{\R}{\mathbb{R}}

\newcommand{\E}{\mathop \mathbb{E}}

\newcommand{\cX}{\mathcal{X}}
\newcommand{\cY}{\mathcal{Y}}

\title{A note on shifting distributions via Poisson races}

\author{Amir Yehudayoff}
\address{Department of Computer Science, The University of Copenhagen and Department of Mathematics, Technion-IIT}
\email{amir.yehudayoff@gmail.com}

\begin{document}

\begin{abstract}
This expository note is about simulating a target distribution $Q$ from observations of 
a proposal distribution $P$.
In the model suggested by Harsha, Jain, McAllester and Radhakrishnan,
we observe an infinite sequence of i.i.d.\ samples $X_1,X_2,\ldots$ from $P$.
The goal is to find some index $I \in \{1,2,\ldots\}$ such that $X_I \sim Q$
while minimizing $\E \log I$.
Following the Poisson-race approach developed by
Maddison, Li and El Gamal, and others, this note 
shows that if $D(Q||P) < \infty$ then
there is a $P$ to $Q$ simulator $I$ such that
$\E [\log I] 
\leq D(Q||P) + 1.45 \|Q-P\|_1$.
In the other direction, for every $P$ to $Q$ simulator $I$,
the cost is at least
$\E[\log I] \geq \frac{1}{2} \max \{ D(Q||P),  \|Q-P\|_1 \}$.
\end{abstract}
\maketitle

\section{Introduction}

The problem of simulating a target distribution $Q$ 
given access to some other distribution $P$ has been studied for a long time.
It was studied by von Neumann~\cite{vonNeumann1951RandomDigits}
who introduced rejection sampling,
and by Wyner~\cite{Wyner1975CommonInformation} who introduced
the common information problem.
The specific form of the problem described below was first suggested,
to the best of my knowledge, in the work of
Harsha, Jain, McAllester and Radhakrishnan~\cite{HarshaEtAl2007Correlation}.
But this type of problem already implicitly appeared in the work of Hajek and Pursley~\cite{hajek1979evaluation}, 
and a close variant of it was termed the functional 
representation lemma by El Gamal and Kim~\cite{el2011network}
and the strong functional representation lemma by
Li and El Gamal~\cite{LiElGamal2018SFRL}.
This note is too brief to provide a comprehensive description of the history
(for more details, see the references above
and also~\cite{block2023sample,goc2024channel,liu2018rejection}
and references therein).

There are two distributions $P,Q$ over some finite set\footnote{We focus
on the case of a finite set for simplicity.} $\cX$.
We observe an infinite sequence of i.i.d.\ samples $X_1,X_2,\ldots$
from $P$.
Our goal is to find some index $I \in \{1,2,\ldots\}$ such that $X_I \sim Q$;
that is, $\Pr[X_I = x] = Q(x)$ for every $x \in \cX$.
We call such an index $I$ a simulator (it can also be randomized).

Here is an important example:
a two-player game with public shared randomness.
Alice knows some distribution $R$ on pairs $\cX \times \cY$,
and Bob knows the marginal distribution $R_\cY$.
Alice gets as input $X \sim R_\cX$ and
her goal is to communicate some information to Bob that will allow
Bob to compute $Y$ with the property that $(X,Y) \sim R$.
The two extreme cases are (1) when $R = R_\cX \times R_\cY$ is a product distribution
and (2) when $\cX = \cY$ and $R$ is supported on the diagonal $\{(x,x)\}$.
The players can use a simulator to solve this problem.
The players interpret the shared randomness as infinitely many i.i.d.\ samples
$Y_1,Y_2,\ldots$ from $P:=R_\cY$.
Alice gets $X$ and constructs a simulator $I$ for $Q( y ): = R(y|X)$.
She sends $I$ to Bob who can safely output $Y: = Y_I$.
In case (1) above, Alice just sends $I=1$.
In case (2), Alice needs to send the entire $X$ to Bob
which requires about the entropy of $X$ many bits.

Most of the works mentioned above were interested in minimizing
the expected length of prefix-free encodings of simulators.
This note considers a different (but closely related) minimization problem.
The goal here is to minimize the ``description length'' of $I$ in bits.
The cost of a simulator $I$ is 
\[cost(I):=\E [\log I]\] where $\log$ is base two.

The main purpose of this text is to prove the following theorem,
which provides both an upper bound
and lower bounds on the cost of simulators. 

\begin{theorem*}
Let $P,Q$ be two distributions over the same finite set.
If $D(Q||P) < \infty$ then
there is a $P$ to $Q$ simulator $I$ such that
\begin{align*}
\E [\log I] 
\leq D(Q||P) + 1.45 \|Q-P\|_1.
\end{align*}
On the other hand, for every $P$ to $Q$ simulator $I$,
\[\E[\log I] \geq \frac{1}{2} \max \{ D(Q||P),  \|Q-P\|_1 \}.\]
In particular, if $D(Q||P)=\infty$ then there is no $P$ to $Q$ simulator.
\end{theorem*}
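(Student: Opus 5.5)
For the upper bound I will use the Poisson race. Let $T_1<T_2<\cdots$ be the arrival times of a rate-one Poisson process on $(0,\infty)$, independent of the samples $X_1,X_2,\ldots\sim P$. Set
\[I := \arg\min_i \frac{T_i P(X_i)}{Q(X_i)},\]
with $I$ ranging over indices with $Q(X_i)>0$. This is a randomized simulator.

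By the marking and mapping theorems, the points $\{(T_iP(X_i)/Q(X_i),X_i)\}$ form a Poisson process. For each $x$, the points carrying label $x$ form a Poisson process with rate $P(x)\cdot Q(x)/P(x)=Q(x)$ in the scaled time, independently across $x$. The winner is therefore labelled $x$ with probability $Q(x)$, so $X_I\sim Q$. Moreover the winning time $W$ is $\mathrm{Exp}(1)$ and is independent of $X_I$.

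Given $X_I=x$ and $W=w$, the index $I$ equals $1$ plus the number of earlier arrivals $T_j<T_I=wQ(x)/P(x)$ that did not win. Those earlier arrivals with label $y$ must have scaled time exceeding $w$, so they form a Poisson process of intensity $P(y)$ on $\{t< wQ(x)/P(x),\ tP(y)/Q(y)>w\}$. Their count is therefore Poisson with mean
\[\mu(x,w)=w\sum_y P(y)\Bigl(\frac{Q(x)}{P(x)}-\frac{Q(y)}{P(y)}\Bigr)_+.\]
By Jensen, $\E[\log I\mid x,w]\le \log(1+\mu(x,w))$. Write $r=Q(x)/P(x)$. Then $\sum_y P(y)(r-Q(y)/P(y))_+\le r$, and it is also at most $(r-1)+\tfrac12\|Q-P\|_1$ in the form $\sum_y (rP(y)-Q(y))_+\le (r-1)_+ + \sum_y (P(y)-Q(y))_+$.

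The goal is to bound $\E_{x\sim Q}\E_w\log(1+w\,c(x))$ by $\E_{x\sim Q}\log r + 1.45\|Q-P\|_1$. For the main term I would use $\log(1+wc)\le \log r+\log(1/r+w c/r)$ together with $c/r\le 1$, and then handle the deficit by splitting into $x$ with $r\ge1$ and $x$ with $r<1$. The constant $1.45\approx \log_2 e$ suggests the final step is $\ln(1+u)\le u$ converted to bits, applied to a correction whose $Q$-average is bounded by $\|Q-P\|_1$.

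The hardest step is exactly this optimization: combining the two bounds on $\mu$ so that the excess over $D(Q\|P)$ is at most $\log_2 e\cdot\|Q-P\|_1$ uniformly. This likely needs a careful choice of which of the two bounds to use at each $(x,w)$, and possibly a sharper estimate than Jensen for small $w$.

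For the lower bounds, let $I$ be any simulator. Then $\Pr[X_I=x]\le\sum_i \Pr[I=i,X_i=x]$ with $\Pr[X_i=x]=P(x)$.

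For the $D(Q\|P)$ bound, the data-processing inequality for the pair $(I,X_I)$ against the reference measure under which $X_i\sim P$ given $I=i$ and $I$ has law $\pi_i\propto 1/i^{\,s}$ gives $D(Q\|P)\le D(\text{law}(I,X_I)\|\cdot)$. The standard estimate $\Pr[I=i,X_I=x]\le P(x)$ yields $D(Q\|P)\le \E\log I+$ lower-order terms. To reach the clean factor $\tfrac12$, I would instead bound $Q(x)\le \sum_{i}\min(P(x),\Pr[I=i,X_I=x])$ and use $\E\log I\ge \sum_x Q(x)\log(Q(x)/P(x))$ up to a factor $2$ coming from $\log(Q/P)\le 2\log I$ on the event where $I\ge \sqrt{Q/P}$.

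For the $\|Q-P\|_1$ bound, note $\Pr[I=1,X_1=x]\le P(x)$. Hence $\Pr[I\ge2]\ge\sum_x(Q(x)-P(x))_+=\tfrac12\|Q-P\|_1$, and $\E\log I\ge \Pr[I\ge 2]$. Finally, if $D(Q\|P)=\infty$, some $x$ has $Q(x)>0=P(x)$, so $X_I=x$ has probability zero and no simulator exists.
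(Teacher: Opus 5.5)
Your race and your conditional mean $\E[I-1\mid X_I=x,\,W=w]=w\,c(x)$, with $c(x)=\sum_y P(y)(R(x)-R(y))_+$ and $R=Q/P$, match the paper. But you stop before the one real estimate, so the upper bound is not actually proved.

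No sharper-than-Jensen step for small $w$ is needed. Since $W\sim\mathrm{Exp}(1)$ is independent of $X_I$, you have $\E[I\mid X_I=x]=1+c(x)$, and Jensen gives $\E\log I\le\sum_x Q(x)\log(1+c(x))$.

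The paper closes the argument with the identity $1+c(x)=R(x)+\sum_y P(y)(R(y)-R(x))_+$. It holds because the two positive-part sums differ by $\sum_y P(y)(R(x)-R(y))=R(x)-1$, and it splits off $\log R(x)$ exactly. Then $\log_2(1+\xi)\le\xi/\ln 2$, $Q(x)/R(x)=P(x)$, and
\[\sum_{x,y}P(x)P(y)(R(y)-R(x))_+=\frac{1}{2}\sum_{x,y}P(x)P(y)\,|R(y)-R(x)|\le\sum_x P(x)|R(x)-1|=\|Q-P\|_1\]
yield $D(Q\|P)+\|Q-P\|_1/\ln 2$, and $1/\ln 2<1.45$.

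Your own bound $c(x)\le (R(x)-1)_+ + \delta$, with $\delta=\frac{1}{2}\|Q-P\|_1$, would also suffice if you finished it:
\begin{itemize}
\item If $R(x)\ge 1$, it gives $\log(1+c(x))\le\log R(x)+\delta/\ln 2$.
\item If $R(x)<1$, it gives $\log(1+c(x))\le\delta/\ln 2$.
\item The leftover term satisfies $\sum_{x:R(x)<1}Q(x)\log\frac{P(x)}{Q(x)}\le\frac{1}{\ln 2}\sum_x(P(x)-Q(x))_+=\delta/\ln 2$.
\end{itemize}
This gives the same total. The bound $c\le r$, by contrast, is useless here: it leaves $\E_Q\log(1+P/Q)$, which equals $1$ at $P=Q$.

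The divergence lower bound is not established either. The data-processing remark only yields $D(Q\|P)\le\E\log I$ plus unspecified error terms. The event argument loses too much. On $\{I<\sqrt{R(x)}\}$ you only know $\log I\ge 0$, and since $\Pr[I=i\mid X_I=x]\le 1/R(x)$, this event can have conditional probability close to $R(x)^{-1/2}$. So you get only $\frac{1}{2}\log R(x)\cdot\Pr[I\ge\sqrt{R(x)}\mid X_I=x]$, which is strictly less than $\frac{1}{2}\log R(x)$; for example, at $R(x)=4$ it gives $\frac{3}{4}$ instead of $1$.

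What is needed is the exact extremal problem. Given $X_I=x$, the conditional law of $I$ has all atoms at most $1/R$. Among such laws, $\E\log Y$ is minimized by putting mass $1/R$ on each of $1,\ldots,M=\lfloor R\rfloor$ and the remainder on $M+1$. Writing $R=M+\theta$, its value is $\frac{(1-\theta)\log M!+\theta\log (M+1)!}{R}$. This is at least $\frac{1}{2}\log R$ by $\log n!\ge\frac{n}{2}\log n$ (which follows from $k(n+1-k)\ge n$) and convexity of $\xi\log\xi$. For $R(x)\le 1$, the bound $\log I\ge 0\ge\frac{1}{2}\log R(x)$ is trivial. Averaging over $x\sim Q$ gives $D(Q\|P)/2$.

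Your total-variation lower bound is the paper's argument and is correct. Your direct proof that no simulator exists when $D(Q\|P)=\infty$ is also fine.
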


Harsha, Jain, McAllester and Radhakrishnan~\cite{HarshaEtAl2007Correlation}
showed that for every $P,Q$ such that $D(Q||P) < \infty$,
there is a simulator $I$ with cost
$\E [\log I] \leq D(Q||P) + C$ where $D(Q||P) = \sum_x Q(x) \log \frac{Q(x)}{P(x)}$ is the Kullback–Leibler divergence
and $C>0$ is some universal constant.
The mechanism behind the proof presented here is different from the mechanism in~\cite{HarshaEtAl2007Correlation}.
Their simulator is based on a carefully constructed rejection sampling
procedure. 
This text uses the Poisson-race mechanism described by Maddison~\cite{Maddison2016PoissonMonteCarlo} and that also appeared in 
the work of Li and El Gamal~\cite{LiElGamal2018SFRL} mentioned above. 
The proof mechanism is informative and it may, e.g., allow to control higher moments of
$\log I$.

The upper bound in the theorem comprises two terms.
The divergence term $D(Q||P)$ is dominant when the distributions are far away.
The total variation term $1.45 \|Q-P\|_1$ is dominant when the distributions are close.
The total variation term could be as large as $+2.9$,
but it is known that it can be replaced by $+1$;
see~\cite{li2024pointwise} (a simple variant of the proof presented here
also leads to this $+1$ bound).
The theorem above is strictly stronger than 
all the results mentioned above when $P,Q$ are close
(because all of them had some additive constant term).
To the best of my knowledge, the two lower bounds were not observed before.

The lower bound has an interesting consequence.
An upper bound of the form
$cost(I) \leq C D(Q||P)$ is not achievable in general (where $C>0$ is any universal constant).
For example, when $P$ is an unbiased coin and $Q$ is a coin with bias $\eps>0$,
we have that $D(Q||P) \approx \eps^2$ but $\E \log I \gtrsim \|Q-P\|_1 \approx \eps$.
In this example, for every $I$, the cost of $I$ is much larger than $D(Q||P)$.

The authors of~\cite{HarshaEtAl2007Correlation} showed a lower bound of $D(Q||P)$ 
on the cost of a prefix-free encoding of $I$.
This means that when $D(Q||P)$ is large, the lower bound 
stated above is essentially off by a factor of $2$.
The lower bound
$\E[\log I] \geq D(Q||P)$, however, is not true in general.
For example, if $P$ is a distribution on $\{0,1\}$ with $P(0)=1-\eps$
and $Q$ is defined by $Q(0)=1$ then 
$D(Q||P) = \frac{\eps}{\ln 2} + O(\eps^2)$ and if $I$ is the first occurrence of $0$
then $I$ is a simulator and $\E[\log I] = \eps + O(\eps^2)$
so that $cost(I) \leq 0.7 D(Q||P)$.

Here is an intuitive sketch of the mechanism.
A Poisson process can be thought of as a cloud of random points on 
the line $\R$ (see detailed description and references below). We can (independently) mark each point in the cloud by $x \in \cX$ using $P$.
So now we have a cloud of marked points. 
Each mark $x \in \cX$ has density $P(x)$ in the cloud.
This marked cloud represents the distribution $P$. 
Our goal is to shift from $P$ to $Q$. 
The main point is that Poisson processes are determined by their local densities.
Changing densities is straitforward. For example, if we dilate all points marked by $x$
by a factor of two then we halved their density.
More interestingly, if we dilate all points marked $x$ by
$\frac{P(x)}{Q(x)}$, then their density becomes \[P(x) \cdot \frac{Q(x)}{P(x)} = Q(x).\]
The new dilated cloud represents~$Q$.
We thus shifted from $P$ to $Q$ via a dilation.

\subsection*{Acknowledgement}

This note was written following the summer school 
A-PIC 2026: Probability, Information, Computing held at the African Institute for Mathematical Science in Kigali, Rwanda.
I wish to thank Jan H\k{a}z\l{}a, as well as the participants of the school and the AIMS staff, for their
 hospitality. 
 I also wish to thank A.\ El Gamal and C.T.\ Li for helpful comments.
 A preliminary draft of this note was written 
as a preparation for the school with the assistance of ChatGPT.

\section{Poisson processes}

Here is a very brief introduction to Poisson processes and Poisson races;
see for example~\cite{DaleyVereJones2003,LastPenrose2017,Kingman1993}.
A rate-$\lambda$ Poisson process on $\R_+$ 
\[0< T_1 < T_2 < \ldots\]
is characterized by the following properties. 
First, for every measurable $M \subset (0,\infty)$ of finite measure $|M|<\infty$,
the count
\[N(M) = \big| \{ i \in \N : T_i \in M\} \big| \]
is distributed as
\[N(M) \sim Pois(\lambda |M|).\]
Second, if $M_1,\ldots,M_m$ are disjoint then the counts
$N(M_1),\ldots,N(M_m)$ are independent. 

We mention a couple of basic properties.
If we multiply every $T_i$ by some fixed $\alpha > 0$
then we get a Poisson process with rate $\frac{\lambda}{\alpha}$.
The union of (finitely many) independent Poisson processes is a Poisson process 
with rate being the sum of the rates.

A key idea for simulation is to use Poisson marking.
Assume $P$ is a distribution on a finite domain $\cX$
and let $X_1,X_2,\ldots$ be i.i.d.\ samples from $P$.
Let $0 < T_1 < T_2 < \ldots$ be a rate-$1$ Poisson process
(chosen independently from $X_1,X_2,\ldots$).
For each $x \in \cX$, we can consider the marked subprocess
$T^x_1 < T^x_2 < \ldots$ that is obtained by keeping only
the $i$'s such that $X_i = x$. This marked process is a Poisson process
with rate $P(x)$.
In addition, these $|\cX|$ processes are independent
(if $P(x)=0$, the corresponding process is empty).

\section{The simulator}

\subsection{A Poisson race}

The following lemma summarizes the simulator and its correctness.

\begin{lemma*}
Let $P,Q$ be two distributions over a finite set $\cX$
such that $D(Q||P) < \infty$.
%$P(x) Q(x) > 0$ for all $x \in \cX$.
Let $X_1,X_2,\ldots$ be i.i.d.\ from $P$.
Let $T_1 < T_2 < \ldots$
be a rate-$1$ Poisson process on $(0,\infty)$ that is
independent of $X_1,X_2,\ldots$.
For each $i$, define
\[S_i = \frac{T_i}{R(X_i)}\]
where
\[R(x) := \frac{Q(x)}{P(x)}>0\]
(when $Q(X_i)=0$, define $S_i=\infty$).
Let
\[I = \mathsf{argmin} \ S_i.\]
Then,
\[X_I \sim Q.\]
\end{lemma*}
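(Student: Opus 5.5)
We will prove this using the Poisson marking property stated in the previous section, together with the dilation and superposition facts. First, split the rate-$1$ process $T_1<T_2<\cdots$ according to the marks $X_i$. For each $x\in\cX$, the marked subprocess $T^x_1<T^x_2<\cdots$ is a Poisson process of rate $P(x)$, and these $|\cX|$ processes are independent. Marks with $Q(x)=0$ get $S_i=\infty$, so they never attain the minimum and can be discarded. Since $D(Q\|P)<\infty$, every $x$ with $Q(x)>0$ has $P(x)>0$, so $R(x)\in(0,\infty)$ is well defined on the support of $Q$.

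Next, for each $x$ with $Q(x)>0$, the rescaled points $S^x_j = T^x_j/R(x)$ form, by the dilation property (multiplying by $\alpha=1/R(x)$ multiplies the rate by $R(x)$), a Poisson process of rate $P(x)R(x)=Q(x)$. These rescaled processes remain independent. The quantity $\min_i S_i$ is therefore the minimum over $x$ of the first points $E_x:=S^x_1$. Each first arrival of a rate-$Q(x)$ Poisson process is exponential with parameter $Q(x)$, since $\Pr[E_x>t]=\Pr[N((0,t])=0]=e^{-Q(x)t}$. Moreover, the $E_x$ are independent.

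The final step is the standard exponential race. We have
\[
\Pr[X_I=x]=\Pr\Big[E_x<\min_{y\neq x}E_y\Big]=\int_0^\infty Q(x)e^{-Q(x)t}\prod_{y\neq x}e^{-Q(y)t}\,dt=\frac{Q(x)}{\sum_y Q(y)}=Q(x),
\]
using $\sum_y Q(y)=1$. Ties occur with probability zero because the distributions are continuous. The minimum is attained almost surely, since each process has only finitely many points in bounded intervals and the total rate is positive. Hence $I$ is almost surely well defined, and $X_I\sim Q$.

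The main point requiring care is the independence claim in the first step: conditionally on the marks, the subprocesses must be genuinely independent Poisson processes. Here we simply invoke the Poisson marking property stated earlier. A second subtlety is that $I$ indexes the original sequence, so $X_I$ is exactly the mark of the winning point, which is what the race computes.
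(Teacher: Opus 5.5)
Your proof is correct and is essentially the paper's argument: split the rate-$1$ process by marks, dilate each subprocess to a rate-$Q(x)$ Poisson process, and compare first arrivals. The only difference is at the end, where you compute the explicit exponential-race integral, while the paper superposes the rescaled processes into a rate-$1$ process marked by $Q$ and reads off the mark of its first point; one wording slip is that the subprocesses are independent Poisson processes \emph{unconditionally} (conditioning on the marks $X_1,X_2,\ldots$ would make them dependent), but the marking property you invoke is exactly this unconditional statement, so the argument is unaffected.
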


\begin{proof}
We have $|\X|$ independent marked Poisson processes $(T^x_i)$.
Because $D(Q||P) < \infty$,
if $Q(x)>0$ then $P(x) > 0$.
If $Q(x)=0$ and $P(x)>0$ then $S^x_i=\infty$ for all $i$.
If $P(x)=0$ then $x$ never appears.
So we can ignore the $x$'s such that $Q(x)P(x) =0$;
they are either fixed or never appear. 
For a remaining $x$, the process $(T^x_i)$ has rate $P(x)$ so 
$S^x_i$ is a Poisson process with rate
$P(x) R(x)= Q(x)$.

The processes $(S^x_i)$ are independent.
Their union is a marked Poisson process with rate $1 = \sum_x Q(x)$.
The union process is marked by $Q$. 
Order the elements of the new process as
$S_{(1)} < S_{(2)} < \ldots$
then the mark of $S_{(1)}$ is distributed according to $Q$.
The marks of $S_{(1)}$ and $T_I$ are the same by definition so 
\[X_I \sim Q.\]
The winner $I$ is well-defined because we need to consider only finitely many
of the $X_i$'s (only the first one of each type $x \in \cX$).
\end{proof}

\subsection{Upper bound on cost}
It remains to bound $\E \log I$ from above.
First, consider the regular conditional law given $\{X_I=x,S_I = s\}$.
The conditional $\{S_I=s\}$ means that 
$S_{(2)} < S_{(3)} < \ldots$ is a rate 1 Poisson process on $(s,\infty)$.
The conditional $\{X_I=x\}$ means that $S_{(1)}$ is marked by $x$
and the rest of the marks are independent of that choice.

In particular, for each $y \neq x$,
the process $(S^y_i)$ conditioned on $\{X_I=x,S_I=s\}$
is a Poisson process on $(s,\infty)$ with rate $Q(y)$.
This means that $(T_i^y) = (R(y) S_i^y)$ conditioned on $\{X_I=x,S_I=s\}$ is a  
Poisson process with rate $P(y)$ on $(s R(y),\infty)$.
Conditioned on $\{X_I=x,S_I=s\}$,
the expected number of points of $(T^y_i)$ before 
$T_I = s R(x)$ is therefore $s (R(x)-R(y))_+ P(y)$.

Summing over $y$,
\[\E[I-1|X_I=x,S_I=s] = \sum_y s P(y) (R(x)-R(y))_+ .\]
Because the rate of the process defined by the $S_i$'s is $1$ and it is independent of the markings,
\[\E[I|X_I=x] = \E_s \Big[ 1+ \sum_y s P(y) (R(x)-R(y))_+ \Big]
=  1+ \sum_y  P(y) (R(x)-R(y))_+ .\]
Because
\begin{align*}
& \Big(\sum_y  P(y) (R(x)-R(y))_+ \Big) - \Big( \sum_y  P(y) (R(y)-R(x))_+ \Big)\\
& = \sum_y  P(y) (R(x)-R(y)) \\
& = R(x) - \sum_y  P(y) \frac{Q(y)}{P(y)} = R(x) - 1,
\end{align*}
we can write
\begin{align*}
1+ \sum_y  P(y) (R(x)-R(y))_+
& = R(x) + \Big( \sum_y  P(y) (R(y)-R(x))_+ \Big) .
\end{align*}

\begin{remark}
Because $X_I \sim Q$,
this allows to control the expected value of $I$ as
\[\E[I] =\sum_x Q(x)  \Big( R(x) +  \sum_y  P(y) (R(y)-R(x))_+ \Big)
\leq 1+\sum_x \frac{Q^2(x)}{P(x)}  .\]
\end{remark}

Averaging over $X_I \sim Q$ as well, because $\log \xi$ is concave,
\begin{align*}
\E [\log I] 
& = \E_{x \sim Q} \E [\log I|x]  
 \leq \E_{x} \log( \E [ I|x])  .
\end{align*}
Substituting the above,
\begin{align*}
\E [\log I] 
& \leq \E_{x} \log\Big( R(x) + \Big( \sum_y  P(y) (R(y)-R(x))_+ \Big) \Big)  \\
& = D(Q||P)+
\Big( \sum_x Q(x)  \log\Big( 1 + \Big( \sum_y  P(y) \frac{(R(y)-R(x))_+}{R(x)} \Big) \Big) .
\end{align*}
It remains to bound the second term.
Because $\log_2(1+\xi) \leq \frac{\xi}{\ln 2}$ for $\xi \geq 0$,
\begin{align*}
&  \sum_x Q(x)  \log\Big( 1 + \Big( \sum_y  P(y) \frac{(R(y)-R(x))_+}{R(x)} \Big) \\
& \leq \frac{1}{\ln 2} \sum_{x,y} Q(x)   P(y) \frac{(R(y)-R(x))_+}{R(x)} \\
%& = \frac{1}{\ln 2} \sum_{x,y} Q(x)  P(y) \frac{(R(y)-R(x))_+}{Q(x)/P(x)} \\
& = \frac{1}{\ln 2} \sum_{x,y} P(x)  P(y) (R(y)-R(x))_+.
\end{align*}
By ``symmetry'' between $x,y$,
\begin{align*}
\sum_{x,y} P(x)  P(y) (R(y)-R(x))_+
& =\frac{1}{2}  \sum_{x,y} P(x)  P(y) |R(y)-R(x)| \\
& =\frac{1}{2}  \sum_{x,y} P(x)  P(y) |R(y)-1 + 1-R(x)| \\
& \leq \sum_{x,y} P(x)  P(y) |R(x)-1| 
%\\ & = \sum_{x} P(x)  |R(x)-1| 
= \|Q-P\|_1 .
\end{align*}
Altogether,
\begin{align*}
\E [\log I] 
\leq D(Q||P) + \frac{\|Q-P\|_1}{\ln 2}
\end{align*}

\section{Lower bounds}

Assume $X_1,X_2,\ldots$ are i.i.d.\ from $P$
and that $I$ is such that $X_I \sim Q$.

\subsection{Total variation bound}
For each $x$,
\[\Pr[X_I = x, I=1] \leq \Pr[X_1 = x] = P(x)\]
and 
\[\Pr[X_I = x, I=1] \leq \Pr[X_I=x] = Q(x).\]
So,
\[\Pr[I=1] \leq \sum_x \min\{P(x),Q(x)\} = 1 - \frac{\|Q-P\|_1}{2} . \]
If $I > 1$ then $\log I \geq 1$ so
\begin{align*}
\E [\log I]
\geq \Pr[I > 1] \geq \frac{\|Q-P\|_1}{2} .
\end{align*}

\subsection{Divergence bound}
For every $x$ such that $Q(x)>0$,
\[Q(x) \Pr[I=i|X_I=x] = \Pr[X_I=x,I=i]
\leq \Pr[X_i=x] \leq P(x)  \]
or
\[\Pr[I=i|X_I=x] \leq \frac{1}{R(x)} \]
where $R(x) = \frac{Q(x)}{P(x)}$.

If $R(x) \leq 1$ then
\[\E[\log I|X_I=x] \geq 0 \geq \frac{1}{2} \log R(x).\]
If $R(x) > 1$ then argue as follows.
Out of all distributions of $Y$ on $\{1,2,\ldots\}$
such that $\Pr[Y=y] \leq \frac{1}{R}$ for all $y$,
the one that minimizes $\E \log Y$
gives mass $\frac{1}{R}$ for the first $M:=\lfloor R \rfloor \geq 1$ elements
and the rest of the mass $1-\frac{M}{R} \leq \frac{1}{R}$ is given to the element $M+1$.
For this distribution,
\begin{align*}
\E[ \log Y]
& =\Big( \sum_{i \leq M} \frac{1}{R} \log i \Big)
+ \Big( 1- \frac{M}{R} \Big) \log (M+1)  \\
& = \frac{ \log (M!) + (R-M) \log(M+1) }{R} .
\end{align*}
Write $R=M+\theta$ with $0\le\theta<1$. 
For an integer $n$, we have
$\log(n!) \geq \frac{n}{2} \log n$
because $k(n+1-k) \geq n$.
So,
\begin{align*}
\log (M!) + (R-M) \log(M+1)  
& = \log (M!) + \theta \log(M+1)  \\
& = (1-\theta)\log (M!) + \theta \log((M+1)!) \\
& \geq (1-\theta) \frac{M}{2} \log(M) + \theta \frac{M+1}{2} \log(M+1) .
\end{align*}
Because $\xi \log \xi$ is convex,
\begin{align*}
& \log (M!) + (R-M) \log(M+1)  \\
& \geq \frac{1}{2} \big( (1-\theta)M + \theta (M+1) \big)
\log \big( (1-\theta)M + \theta (M+1) \big)  = \frac{1}{2} R \log R.
\end{align*}
We can conclude that also if $R(x) > 1$,
\[\E[\log I|X_I=x] \geq \frac{1}{2} \log R(x).\]

Taking expectation,
\[\E[ \log I] =
\E_{x \sim Q} \E [ \log I |x] \geq \frac{1}{2} \E \log R(x)
= \frac{D(Q||P)}{2}.\]

\bibliographystyle{amsalpha} 
\bibliography{Corr.bib}

\end{document}